\def\Z{{\mathbf{Z}}}    
\def\R{{\mathbf{R}}}    
\def\C{{\mathbf{C}}}    
\def\0{{\mathbf{0}}}
\def\#{\sharp}
\def\Spec{{\mathrm{Spec}}}
\renewcommand{\Re}{\mathrm{Re}}
\def\Arg{{\mathrm{Arg}}}
\def\cP{\mathcal{P}}
\def\cZ{\mathcal{Z}}
\def\Ei{{\mathrm{Ei}}}
\newcommand{\Res}[2]
{\,\underset{#1=#2}{\mathrm{Res}}\,}
\newtheorem{theorem}{Theorem}
\newtheorem*{maintheorem*}{Main Theorem}
\newtheorem*{theorem*}{Main Theorem}
\newtheorem{corollary}{Corollary}
\newtheorem{lemma}{Lemma}
\newtheorem*{keylemma*}{KeyLemma}
\newtheorem{prop}{Proposition}
\newtheorem{example}{Example}
\newtheorem*{fact*}{Fact}
\newtheorem*{remark*}{Remark}
\renewenvironment{proof}%
	{\smallbreak\noindent {\bf Proof.\ }}%
	{\hskip 1em \rule[-2pt]{5pt}{10pt}

 \medbreak}
\title{On graph theory Mertens' theorems}
\author{
Takehiro HASEGAWA (up date 01/Mar/2014) \\
Shiga University, Otsu, Shiga 520-0862, Japan 
\\
Seiken SAITO \\
Waseda University, Shinjuku, Tokyo 169-8050, Japan
\\
}
\begin{document}
\maketitle

\begin{abstract} 
In this paper, we study graph-theoretic analogies of the Mertens' theorems by using basic properties of the Ihara zeta-function. 
One of our results is a refinement of a special case of the dynamical system Mertens' second theorem due to Sharp and Pollicott. 
\end{abstract}

\bigskip

2010 Mathematical Subject Classification: 11N45, 05C30, 05C38, 05C50 \\
Key words and phrases: Ihara zeta-functions, Mertens' theorem, Primes in graphs

\section{Introduction}\label{sect1}

Throughout this paper, we use the notation in the textbook \cite{Terras2011} of Terras for graph theory and the (Ihara) zeta-function, and we often refer to basic facts included in this textbook. 

\bigskip

In 1874, Mertens proved the so-called Mertens' first/second/third theorems (the equalities (5)(13)(14) in \cite{Mertens}, respectively). 
In 1991, Sharp studied the dynamical-systemic analogues of Mertens' second/third theorems (Theorem 1 in \cite{Sharp}), and in 1992, Pollicott improved the error terms in the theorems of Sharp as follows (Theorem and Remark in \cite{Pollicott}): 
\begin{itemize}
\item 
Dynamical system Mertens' second theorem: \ 
For a hyperbolic (and so geodesic) flow (which is not necessarily topologically weak-mixing) restricted to a basic set with closed orbits $\tau$ of least period $\lambda(\tau)$ and topological entropy $h>0$, as $x \to \infty$, 
\begin{align*}
\sum_{N(\tau) \leq x} \frac{1}{N(\tau)} = \log \left( \log x \right) &+ \gamma+ \log \left( \Res{s}{1} \zeta(s) \right) \\
&- \sum_{\tau} \sum_{n \geq 2} \frac{1}{n} \cdot \frac{1}{N(\tau)^{n}} + O \left( \frac{1}{\log x} \right), 
\end{align*}
where $N(\tau)= e^{h \lambda(\tau)}$, $\gamma$ is the Euler-Mascheroni constant, 
$$
\zeta(s)= \prod_{\tau} \left( 1- \frac{1}{N(\tau)^{s}} \right)^{-1}, 
$$
and $\Res{s}{1} \zeta(s)$ denotes the residue of $\zeta$ at $s=1$. 

\item 
Dynamical system Mertens' third theorem: \ 
For the same flow, as $x \to \infty$, 
\begin{align*}
\prod_{N(\tau) \leq x} \left( 1- \frac{1}{N(\tau)} \right)= \frac{e^{-\gamma}}{\Res{s}{1} \zeta(s)} \cdot \frac{1}{\log x} \left( 1+ O \left( \frac{1}{\log x} \right) \right). 
\end{align*}
\end{itemize}
(For the notation of dynamical systems, see the textbook \cite{Parry-Pollicott1990} of Parry-Pollicott.)
Note that Sharp and Pollicott did not explicitly write the dynamical system Mertens' first theorem. 

\medskip

From the second theorem of Pollicott, the constant term (so-called Mertens constant) can be explicitly known, but the coefficients of $1/\log^{k} x$ can not be computed. 
Our purpose in this paper is to present graph-theoretic analogies of the Mertens' theorems whose coefficients can be explicitly known. 
So, our second theorem is a refinement of a special case of the theorem due to Pollicott in the sense that the coefficients of $1/\log^{k} x$ can be computed. 

\bigskip

In the rest of this section, we introduce the notation of graph theory, next recall the notation and properties of the (Ihara) zeta-function, and last state the main theorem. 

\medskip

First, we recall the notation of graphs. 
Let $X$ be an undirected graph with vertex set $V$ of $\nu:= |V|$ and edge set $E$ of $\epsilon:= |E|$. 
Simply, such a graph $X$ is denoted by $X:=(V, E)$. 

A directed edge (or an arc) $a$ from a vertex $u$ to a vertex $v$ is denoted by $a= (u,v)$, and the inverse of $a$ is denoted by $a^{-1}= (v,u)$. 
The origin (resp. terminus) of $a$ is denoted by $o(a):= u$ (resp. $t(a):= v$). 

We can direct the edges of $X$, and label the edges as follows: 
$$
\vec{E}:= \big\{ e_{1}, \quad e_{2}, \quad \ldots, \quad e_{\epsilon}, \quad e_{\epsilon+1}= e_{1}^{-1}, \quad e_{\epsilon+ 2}= e_{2}^{-1} \quad \ldots, \quad e_{2\epsilon}= e_{\epsilon}^{-1} \big\}. 
$$
A path $C=a_{1} \cdots a_{s}$, where the $a_{i}$ are directed edges, is said to have a backtrack (resp. tail) if $a_{j+1}= a_{j}^{-1}$ for some $j$ (resp. $a_{s}= a_{1}^{-1}$), and a path $C$ is called a cycle (or closed path) if $o(a_{1})= t(a_{s})$. 
The length $\ell(C)$ of a path $C= a_{1} \cdots a_{s}$ is defined by $\ell(C):= s$. 

A cycle $C$ is called prime (or primitive) if it satisfies the following: 
\begin{itemize}
\item
$C$ does not have backtracks and a tail; 
\item
no cycle $D$ exists such that $C= D^f$ for some $f>1$. 
\end{itemize}
The equivalence class $[C]$ of a cycle $C= a_{1} \cdots a_{s}$ is defined as the set of cycles 
$$
[C]:= \big\{ a_{1} a_{2} \cdots a_{s-1} a_{s}, \qquad a_{2} \cdots a_{s-1} a_{s} a_{1}, \qquad \dots, \qquad a_{s} a_{1} a_{2} \cdots a_{s-1} \big\}, 
$$
and an equivalence class $[P]$ of a prime cycle $P$ is called prime. 

Let $\Delta_{X}$ and $\pi_{X}(n)$ denote 
\begin{align*}
\Delta &= \Delta_{X}:= \gcd \{ \ell(P) : \text{$[P]$ is a prime equivalence class in $X$} \}, \\
\pi(n) &= \pi_{X}(n):= | \{ [P] : \textrm{$[P]$ is a prime equivalence class in $X$ with $\ell(P)=n$} \}|. 
\end{align*}


Throughout this paper, we always assume that $X$ is a finite, connected, noncycle and undirected graph without degree-one vertices, and we denote by a symbol $[P]$ a prime equivalence class. 

\medskip

Next, we recall the zeta-function of $X= (V,E)$. 
The (Ihara) zeta-function of $X$ is defined as follows (the equality (9) in \cite{Ihara}, and also see Definition 2.2 in \cite{Terras2011}): 
\begin{align*}
Z_{X}(u) :&= \prod_{[P]}(1-u^{\ell(P)})^{-1} 
\end{align*}
with $|u|$ sufficiently small, where $[P]$ runs through all prime equivalence classes in $X$. 
The radius of convergence of $Z_{X}(u)$ is denoted by $R_{X}$. 
Note that $0< R_{X}<1$ since $X$ is a noncycle graph (see, for example, page 197 in \cite{Terras2011}). 

Let $W= W_{X}:= (w_{ij})$ denote the edge adjacency matrix of a graph $X$, that is, a $2 \epsilon \times 2 \epsilon$ matrix defined by 
$$
w_{ij} := 
\begin{cases}
\ 1 & \text{if $t(e_{i})= o(e_{j})$ and $e_{j} \neq e_{i}^{-1}$ for $e_{i}, e_{j} \in \vec{E}$}, \\
\ 0 & \text{otherwise} 
\end{cases}
$$
(see page 28 in \cite{Terras2011}). 

\bigskip

In this paper, our main theorem is: 
\begin{maintheorem*}
Suppose that $X= (V, E)$ is a finite, connected, noncycle and undirected graph without degree-one vertices. 
Set 
$$
a= a(N):= \left\{ \frac{N}{\Delta_{X}} \right\} \Delta_{X} \quad \left(= N- \left[ \frac{N}{\Delta_{X}} \right] \Delta_{X} \right), 
$$
where $[x]$ (resp. $\{x \}$) denotes the integer (resp. fractional) part of the real number $x$, and thus $0 \leq a(N)< \Delta_{X}$. 
Then, the following items (1)(2)(3) hold: 
\begin{enumerate}
\item[(1)] \ 
(Graph theory Mertens' first Theorem) \ 
As $N \to \infty$, 
\begin{align*}
\sum_{n \leq N} n \cdot \pi_{X}(n) R_{X}^{n} &= N- a(N)+ A_{X}+ K_{X}+ O \left( (\rho_{X} R_{X})^{N} \right) \\
& \left( = \left[ \frac{N}{\Delta_{X}} \right] \Delta_{X}+ A_{X}+ K_{X}+ O \left( (\rho_{X} R_{X})^{N} \right) \right), 
\end{align*}
where the constants $A_{X}$, $K_{X}$ and $\rho_{X}$ are defined by 
$$
A_{X}:= \sum_{\lambda \in \text{Spec}(W), \atop |\lambda|< 1/R_{X}} \frac{\lambda R_{X}}{1- \lambda R_{X}}, \qquad K_{X}:= \sum_{n=1}^{\infty} \left( n \cdot \pi_{X}(n)- \sum_{\lambda \in \Spec(W)} \lambda^{n} \right) R_{X}^{n}, 
$$
and 
$$
\rho_{X}:= \max \{ |\lambda| : \lambda \in \text{Spec}(W), \ |\lambda|< 1/R_{X} \}, 
$$
respectively. 
(The convergence of $K_{X}$ is shown in Section \ref{sect2}.)

\item[(2)] \ 
(Graph theory Mertens' second Theorem) \ 
As $N \to \infty$, 
\begin{align*}
\sum_{n \leq N} \pi_{X}(n) R_{X}^{n} = \log N &+ \gamma+ \log C_{X}- H_{X} \\
& - \sum_{s=1}^{k} \left( \frac{a^{s}}{s}+ \sum_{m=0}^{s-1} \binom{s-1}{m} \frac{a^{m} B_{s-m} \Delta_{X}^{s-m}}{s-m} \right) \frac{1}{N^{s}}+ O \left( \frac{1}{N^{k+1}} \right) 
\end{align*}
for each $k \geq 1$, where $\gamma$ is the Euler-Mascheroni constant, $B_{s}$ are the $s$-th Bernoulli numbers defined by 
$$
\frac{t}{e^{t}-1}= \sum_{s=0}^{\infty} B_{s} \frac{t^{s}}{s!}, 
$$
and the constants $C_{X}$ and $H_{X}$ are defined by 
$$
C_{X}:= - \frac{1}{R_{X}} \Res{u}{R_{X}} Z_{X}(u), \qquad H_{X}:= - \sum_{n \geq 1} \frac{1}{n} \left( n \cdot \pi_{X}(n)- \sum_{\lambda \in \text{Spec}(W)} \lambda^{n} \right) R_{X}^{n}, 
$$
respectively. 
(The convergence of $H_{X}$ is shown in Section \ref{sect2}.) 

In particular ($k=0$), as $N \to \infty$, 
\begin{align*}
\sum_{n \leq N} \pi_{X}(n) R_{X}^{n} = \log N + \gamma+ \log C_{X}- H_{X} + O \left( \frac{1}{N} \right). 
\end{align*}

\item[(3)] \ 
(Graph theory Mertens' third Theorem, \cite{Hasegawa-Saito}) \ 
As $N \to \infty$, 
\begin{align*}
\prod_{n \leq N} (1- R_{X}^{n})^{\pi_{X}(n)}= \frac{e^{-\gamma}}{C_{X}} \cdot \frac{1}{N} \left( 1+ O \left( \frac{1}{N} \right) \right). 
\end{align*}
\end{enumerate}
\end{maintheorem*}

Note that our first theorem (1) is a refinement of a result in our previous paper \cite{Hasegawa-Saito} in the sense that the constant term $A_{X}+ K_{X}$ is explicitly written, and note that our second theorem (2) is a refinement of a special case of the result due to Pollicott (Theorem (i) and Remark in \cite{Pollicott}) in the sense that all the coefficients of $1/N^{k}$ can be explicitly computed. 
Our proofs in this paper are elementary (without the theory of the Ihara prime zeta-function which is studied in \cite{Hasegawa-Saito}), and moreover they are completely different from previous proofs. 

\bigskip

Our theorems (1)(2) can be simplified under the assumption which all the degrees of vertices are greater than $2$: 
If $X$ is bipartite, then $\Delta_{X}=2$, and so $a(2N)=0$ or $a(2N+1)=1$. 
Otherwise, $\Delta_{X}=1$, and therefore $a(N)=0$. 
(See, for details, Proposition 3.2 in \cite{Kotani-Sunada}.)

\bigskip

The contents of this paper are as follows. 
In the next section, we first prove a keylemma, which plays an important role in the proof of the main theorem, and next introduce the constants in the main theorem. 
In Section \ref{sect3}, we give the proof of the main theorem. 

\section{KeyLemma}\label{sect2}

In this section, in order to show the theorem, we introduce a keylemma and two constants. 

\bigskip

The following facts are often used in this paper. 

\begin{fact*}
Suppose that $X= (V, E)$ satisfies the same conditions as the main theorem. 

\medskip

(1) \ (Theorem 1.4 in \cite{Kotani-Sunada}, and also see Theorem 8.1 (3) in \cite{Terras2011}) \ 
The poles of $Z_{X}(u)$ on the circle $|u|= R_{X}$ have the form $R_{X} e^{2 \pi i a/\Delta_{X}}$, where $a= 1, 2, \ldots, \Delta_{X}$. 

\medskip

(2) \ (Orthogonality relation, for example, see Exercise 10.1 in \cite{Terras2011}) \ 
$$
\sum_{a=1}^{\Delta_{X}} e^{2 \pi i an/\Delta_{X}}= 
\begin{cases}
\ \Delta_{X} & \text{if $\Delta_{X} \mid n$}; \\
\ 0          & \text{otherwise}. 
\end{cases}
$$

\medskip

(3) \ (Two-term determinant formula, \cite{Hashimoto} and \cite{Bass}, and also see (4.4) in \cite{Terras2011}) \ 
The zeta-function $Z_{X}(u)$ can be written as 
$$
Z_{X}(u)= 1/\det (I_{2 \epsilon}- W u)= \prod_{\lambda \in \text{Spec}(W)} (1- \lambda u)^{-1}. 
$$
\end{fact*}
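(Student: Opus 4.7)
The plan is to exploit the dual formula for $N_n := \Tr(W^n)$: Fact (3) yields $N_n = \sum_{\lambda \in \Spec(W)} \lambda^n$, while differentiating the Euler product for $Z_X$ logarithmically gives $N_n = \sum_{d \mid n} d\,\pi_X(d)$. Consequently $n\pi_X(n) - N_n = -\sum_{d \mid n,\, d < n} d\,\pi_X(d)$, and a routine divisor-sum estimate (using that $d\pi_X(d) R_X^d$ is bounded, a consequence of Fact (3)) shows this quantity times $R_X^n$ has exponentially small tail in $n$, so both $K_X$ and $H_X$ converge absolutely. The entire argument will proceed by separating, in each statement, the ``spectral'' contribution (sums of $(\lambda R_X)^n$ treated via Facts (1)--(3)) from a residual series summing to $K_X$ or $H_X$.

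For \textbf{(1)}, I split
\[
\sum_{n \leq N} n\pi_X(n) R_X^n = \sum_{n \leq N} N_n R_X^n + \sum_{n \leq N}(n\pi_X(n) - N_n) R_X^n;
\]
the residual sum equals $K_X$ minus an exponentially small tail. In the first sum I write $N_n = \sum_\lambda \lambda^n$ and split by whether $|\lambda| = 1/R_X$ or $|\lambda| < 1/R_X$. By Facts (1)--(2),
\[
\sum_{|\lambda| = 1/R_X} \sum_{n \leq N} (\lambda R_X)^n = \sum_{n \leq N}\sum_{a=1}^{\Delta_X} e^{2\pi i an/\Delta_X} = \Delta_X \lfloor N/\Delta_X \rfloor = N - a(N),
\]
while the interior eigenvalues contribute $A_X + O((\rho_X R_X)^N)$ by a geometric series.

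For \textbf{(2)}, the analogous decomposition of $\sum_{n \leq N} \pi_X(n) R_X^n$ (the above with each term divided by $n$) reduces the residual part to $-H_X$ and, using Fact (2), converts the spectral boundary contribution into
\[
\sum_{|\lambda|=1/R_X}\sum_{n \leq N}\frac{(\lambda R_X)^n}{n} = H_{\lfloor N/\Delta_X \rfloor},
\]
while the interior eigenvalues give $-\sum_{|\lambda|<1/R_X}\log(1 - \lambda R_X)$ up to exponentially small error. I then apply Euler--Maclaurin $H_M = \log M + \gamma - \sum_{s=1}^k B_s/(sM^s) + O(1/M^{k+1})$ at $M = (N-a)/\Delta_X$, expanding both $\log M = \log N - \log\Delta_X - \sum_s a^s/(sN^s)$ and $1/M^s = \Delta_X^s \sum_j \binom{s+j-1}{j} a^j/N^{s+j}$, and re-indexing; the coefficient of $1/N^s$ becomes exactly $-\frac{a^s}{s} - \sum_{m=0}^{s-1}\binom{s-1}{m}\frac{a^m B_{s-m}\Delta_X^{s-m}}{s-m}$. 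The leftover $-\log\Delta_X$ absorbs into $\log C_X$ via the root-of-unity identity $\prod_{a=1}^{\Delta_X - 1}(1 - e^{2\pi i a/\Delta_X}) = \Delta_X$, which supplies exactly the boundary eigenvalues missing from $C_X = \prod_{\lambda \neq 1/R_X}(1-\lambda R_X)^{-1}$.

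Finally, \textbf{(3)} follows from (2) by taking the logarithm of the product, expanding $\log(1 - R_X^n) = -\sum_{k \geq 1} R_X^{kn}/k$, and obtaining
\[
\log \prod_{n \leq N}(1-R_X^n)^{\pi_X(n)} = -\sum_{k \geq 1}\frac{1}{k}\sum_{n \leq N}\pi_X(n) R_X^{kn}.
\]
The $k=1$ piece is Part (2), and for $k \geq 2$ each series $T_k := \sum_n \pi_X(n) R_X^{kn}$ converges absolutely. A divisor-sum rearrangement starting from $\frac{N_n - n\pi_X(n)}{n} = \sum_{e \mid n,\, e \geq 2} \pi_X(n/e)/e$ shows $H_X = \sum_{k \geq 2} T_k/k$, so the $-H_X$ produced by Part (2) cancels $-\sum_{k \geq 2} T_k/k$ exactly, leaving $-\log N - \gamma - \log C_X + O(1/N)$; exponentiation yields (3). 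The main obstacle will be the Euler--Maclaurin bookkeeping in Part (2) --- matching the double expansion of $H_{\lfloor N/\Delta_X \rfloor}$ in powers of $1/N$ with the specific binomial--Bernoulli closed form --- while everything else reduces to the key identity $N_n = \sum_\lambda \lambda^n = \sum_{d \mid n} d\pi_X(d)$ and applications of the Fact.
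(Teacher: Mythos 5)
You have addressed the wrong statement. What you were given is the boxed \emph{Fact}: three background items --- the form of the poles of $Z_X(u)$ on the circle $|u| = R_X$ (Kotani--Sunada Theorem 1.4, Terras Theorem 8.1(3)), the orthogonality relation for $\Delta_X$-th roots of unity (Terras Exercise 10.1), and the two-term determinant formula $Z_X(u) = \det(I_{2\epsilon} - Wu)^{-1} = \prod_{\lambda\in\Spec(W)}(1-\lambda u)^{-1}$ (Hashimoto, Bass, Terras (4.4)). The paper does not prove any of these; it records them as citations and then builds the KeyLemma and the Main Theorem on top of them. Your proposal, by contrast, \emph{uses} these facts from its very first line (``Fact (3) yields $N_n = \sum_\lambda\lambda^n$,'' and Facts (1)--(2) for the boundary spectral sum), so read as an argument for the Fact itself it would be circular.

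What you actually sketch is a proof of the Main Theorem's items (1)(2)(3), and that sketch tracks the paper's own KeyLemma and Section~3 quite closely: the same spectral/residual decomposition of $\sum_{n\le N}n\pi_X(n)R_X^n$; the same split of $\Spec(W)$ into eigenvalues on $|\lambda|=1/R_X$ (handled by orthogonality, producing $[N/\Delta_X]\Delta_X = N-a(N)$) and $|\lambda|<1/R_X$ (handled by geometric series, producing $A_X+O((\rho_XR_X)^N)$); the same identity $\prod_{a=1}^{\Delta_X-1}(1-e^{2\pi ia/\Delta_X})=\Delta_X$ to absorb $\log\Delta_X$ into $\log C_X$; and the same Euler--Maclaurin expansion at $[N/\Delta_X]=(N-a)/\Delta_X$ to extract the binomial--Bernoulli coefficients. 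Had the target been the Main Theorem this would be essentially the paper's route. But the statement you were asked about is the Fact, for which the appropriate response is either to cite the references (as the paper does) or to reproduce their proofs: the Bass/Hashimoto determinant identity, the Kotani--Sunada argument locating the circle poles at $R_Xe^{2\pi ia/\Delta_X}$, and a one-line geometric-series verification of the orthogonality relation.
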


\bigskip

The following keylemma plays an important role in the proof of the main theorem.
\begin{keylemma*}
Suppose that $X= (V, E)$ satisfies the same conditions as the main theorem. 

\medskip

(a) \ As $N \to \infty$, 
$$
\sum_{n= 1}^{N} \sum_{\lambda \in \text{Spec}(W)} (\lambda R_{X})^{n}= \left[ \frac{N}{\Delta_{X}} \right] \Delta_{X}+ A_{X}+ O \left( (\rho_{X} R_{X})^{N} \right), 
$$
where $[x]$ denotes the integer part of the real number $x$.

\medskip

(b) \ As $N \to \infty$, 
$$
\sum_{n= 1}^{N} \frac{1}{n} \sum_{\lambda \in \text{Spec}(W)} (\lambda R_{X})^{n}= \sum_{n=1}^{[ {N}/{\Delta_{X}} ]} \frac{1}{n}+ \log C_{X}+ \log \Delta_{X}+ O \left( (\rho_{X} R_{X})^{N} \right). 
$$

\medskip

(c) \ (\cite{Hasegawa-Saito}) \ Let $0< \alpha< 1/2$ be a real number, and fix it. 
Then, there exists a natural number $N_{0}= N_{0}(\alpha)$ such that for any $n \geq N_{0}$, 
$$
\left| n \cdot \pi_{X}(n)- \sum_{\lambda \in \text{Spec}(W)} \lambda^{n} \right|< 2 \epsilon \left( \frac{1}{R_{X}} \right)^{(1- \alpha)n}. 
$$

\medskip

(d) \ (cf. Section 2 in \cite{Knuth}) \ 
Set 
$$
a= a(N):= N- \left[ \frac{N}{\Delta_{X}} \right] \Delta_{X}, 
$$
and thus $0 \leq a(N)< 1$. 
Then, 
\begin{align*}
\sum_{n=1}^{[N/\Delta_{X}]} \frac{1}{n} = \log N &- \log \Delta_{X}+ \gamma \\
& - \sum_{s=1}^{k} \left( \frac{a^{s}}{s}+ \sum_{m=0}^{s-1} \binom{s-1}{m} \frac{a^{m} B_{s-m} \Delta_{X}^{s-m}}{s-m} \right) \frac{1}{N^{s}}+ O \left( \frac{1}{N^{k+1}} \right) 
\end{align*}
for each $k \geq 1$. 
\end{keylemma*}
\begin{proof}
In this proof, we abbreviate the suffix $X$, that is, $R= R_{X}$, $\Delta= \Delta_{X}$, etc. 

Let $\{ a_{n} \}$ be a sequence of real numbers with $0< a_{n} \leq 1$ for any $n$. 
Then, it follows from Facts (1)(2)(3) that we obtain the equality 
\begin{align}\label{fact123}
\sum_{n=1}^{N} a_{n} \sum_{\lambda \in \text{Spec}(W), \atop |\lambda|= 1/R} (\lambda R)^{n}= \sum_{n=1}^{N} a_{n} \sum_{a=1}^{\Delta} e^{-2 \pi i an/\Delta}= \Delta \sum_{n=1}^{[N/\Delta]} a_{n \Delta}. 
\end{align}
Moreover, it follows by the triangle inequality that we obtain the inequality 
\begin{align}\label{otherfact123}
\left| \sum_{n>N} a_{n} \sum_{\lambda \in \text{Spec}(W), \atop |\lambda|< 1/R} (\lambda R)^{n} \right| & \leq \sum_{n>N} a_{n} \sum_{\lambda \in \text{Spec}(W), \atop |\lambda|< 1/R} (|\lambda| R)^{n} \nonumber \\
& < 2 \epsilon \sum_{n>N} (\rho R)^{n}= \frac{2 \epsilon \rho R}{1- \rho R} (\rho R)^{N}. 
\end{align}
In the proofs of the items (a)(b), we use the equality (1) and the inequality (2). 

\medskip

(a) \ Note that 
\begin{align*}
\left| \sum_{n=1}^{N} \sum_{|\lambda|= 1/R} (\lambda R)^{n}- \left[ \frac{N}{\Delta} \right] \Delta \right| &= \left| \sum_{n=1}^{[N/\Delta]} \Delta- \left[ \frac{N}{\Delta} \right] \Delta \right|= 0 
\end{align*}
by the equality (\ref{fact123}). 
On the other hand, note that 
\begin{align*}
\left| \sum_{n=1}^{N}  \sum_{|\lambda|< 1/R} (\lambda R)^{n}- A \right| &= \left| \sum_{n> N} \sum_{|\lambda|< 1/R} (\lambda R)^{n} \right| < \frac{2 \epsilon \rho R}{1- \rho R} (\rho R)^{N} 
\end{align*}
from the inequality (\ref{otherfact123}). 
By combining these, the item (a) follows from the triangle inequality. 

\medskip

(b) \ Set the sums 
$$
S_{1}(N):= \sum_{n= 1}^{N} \frac{1}{n} \sum_{|\lambda|= 1/R} (\lambda R)^{n} \qquad \text{and} \qquad S_{2}(N):= \sum_{n= 1}^{N} \frac{1}{n} \sum_{|\lambda|< 1/R} (\lambda R)^{n}. 
$$

First, we consider the sum $S_{1}(N)$. 
It follows from the equality (\ref{fact123}) that 
\begin{align*}
S_{1}(N) &= \sum_{n=1}^{N} \frac{1}{n} \sum_{|\lambda|= 1/R} (\lambda R)^{n} = \sum_{n=1}^{[N/\Delta]} \frac{1}{n}. 
\end{align*}

Next, we compute the sum $S_{2}(N)$. 
We now consider the constant defined by 
\begin{align*}
F:= \log \prod_{|\lambda|< 1/R} \frac{1}{1- \lambda R} \quad \left( = - \sum_{|\lambda|< 1/R} \log (1- \lambda R)= \sum_{|\lambda|< 1/R} \sum_{n \geq 1} \frac{1}{n} (\lambda R)^{n} \right), 
\end{align*}
and then we obtain $F= \log C_{X}+ \log \Delta$. 
This is proved as follows: \ 
Note that 
\begin{align*}
C_{X}= \lim_{u \uparrow R} \frac{(R-u) Z_{X}(u)}{R} &= \lim_{u \uparrow R} \left( 1- \frac{u}{R} \right) \prod_{\lambda \in \text{Spec}(W)} \frac{1}{1- \lambda u} \\
&= \lim_{u \uparrow R} \prod_{\lambda \in \text{Spec}(W), \atop \lambda \neq 1/R} \frac{1}{1- \lambda u}= \prod_{\lambda \in \text{Spec}(W), \atop \lambda \neq 1/R} \frac{1}{1- \lambda R} 
\end{align*}
by the definition of $C_{X}$ and Facts (1)(3). 
It is well known that 
\begin{align*}
\sum_{n=0}^{\Delta-1} X^{n}= \frac{X^{\Delta}-1}{X-1}= \prod_{a=1}^{\Delta-1} \left( X- e^{-2 \pi i a/\Delta} \right), \quad \text{and so} \quad \Delta= \prod_{a=1}^{\Delta-1} \left( 1- e^{-2 \pi i a/\Delta} \right). 
\end{align*}
Combining these equalities, we obtain 
\begin{align*}
\prod_{|\lambda|< 1/R} \frac{1}{1- \lambda R} &= \prod_{\lambda \in \text{Spec}(W), \atop \lambda \neq 1/R} \frac{1}{1- \lambda R} \cdot \prod_{|\lambda|= 1/R, \atop \lambda \neq 1/R} \left( 1- \lambda R \right) \\
&= \prod_{\lambda \in \text{Spec}(W), \atop \lambda \neq 1/R} \frac{1}{1- \lambda R} \cdot \prod_{a=1}^{\Delta-1} \left( 1- e^{-2 \pi i a/\Delta} \right)= C_{X} \cdot \Delta, 
\end{align*}
and thus $F= \log C_{X}+ \log \Delta$. 

\medskip

It follows from the inequality (\ref{otherfact123}) that we obtain the inequality 
\begin{align*}
\left| S_{2}(N)- F \right| &= \left| \sum_{|\lambda|< 1/R} \sum_{n> N} \frac{1}{n} (\lambda R)^{n} \right| < \frac{2 \epsilon (\rho R)^{N+1}}{1- \rho R}, 
\end{align*}
that is, 
$$
S_{2}(N)= F+ O \left( (\rho R)^{N} \right)= \log C_{X}+ \log \Delta+ O \left( (\rho R)^{N} \right). 
$$

Hence, by combining the above results, we obtain 
\begin{align*}
\sum_{n= 1}^{N} \frac{1}{n} \sum_{\lambda \in \text{Spec}(W)} (\lambda R)^{n} &= S_{1}(N)+ S_{2}(N) \\
&= \sum_{n=1}^{[ {N}/{\Delta} ]} \frac{1}{n}+ \log C_{X}+ \log \Delta+ O \left( (\rho R)^{N} \right). 
\end{align*}

\medskip

(c) \ Let $\mu(n)$ denote the M\"obius function. 
Note that $\sum_{d \mid n} |\mu(d)| \leq n$. 
It is known that 
$$
\pi(n)= \frac{1}{n} \sum_{d \mid n} \mu(d) N_{n/d}, \qquad \text{and} \qquad N_{n}= \sum_{\lambda \in \text{Spec}(W)} \lambda^{n} 
$$
(see (10.3) and (10.4) in \cite{Terras2011}, respectively). 
Combining these equalities, we obtain 
$$
n \cdot \pi(n)= \sum_{\lambda \in \text{Spec}(W)} \sum_{d \mid n} \mu(d) \lambda^{n/d}, 
$$
and therefore 
\begin{align*}
\left| n \cdot \pi(n)- \sum_{\lambda \in \text{Spec}(W)} \lambda^{n} \right| &= \left| \sum_{\lambda \in \text{Spec}(W)} \sum_{d \mid n, \atop d \geq 2} \mu(d) \lambda^{n/d} \right| \\
& \leq \sum_{\lambda \in \text{Spec}(W)} \sum_{d \mid n, \atop d \geq 2} | \mu(d)| \cdot |\lambda|^{n/d} \leq \sum_{\lambda \in \text{Spec}(W)} \sum_{d \mid n, \atop d \geq 2} | \mu(d)| \cdot |\lambda|^{n/2} \\
&< n \sum_{\lambda \in \text{Spec}(W)} \left(\frac{1}{R} \right)^{n/2} \leq 2 \epsilon n \left(\frac{1}{R} \right)^{n/2}. 
\end{align*}

On the other hand, since $0<R<1$ and $0< \alpha< 1/2$ by our assumptions, there exists a natural number $N_{0}= N_{0}(\alpha)$ such that for any $n \geq N_{0}$, 
$$
n \leq \left( \frac{1}{R} \right)^{({1}/{2}- \alpha)n}, \qquad \text{and so} \qquad n \left( \frac{1}{R} \right)^{n/2} \leq \left( \frac{1}{R} \right)^{(1- \alpha)n}. 
$$
Hence, for any $n \geq N_{0}$, 
$$
\left| n \cdot \pi(n)- \sum_{\lambda \in \text{Spec}(W)} \lambda^{n} \right| \leq 2 \epsilon \left( \frac{1}{R} \right)^{(1- \alpha)n}, 
$$
and the assertion of the item (c) follows. 

\medskip

(d) \ It is known from the equality (9) in \cite{Knuth} that 
\begin{align}\label{Knuth9}
\left| \int_{[N/\Delta]}^{\infty} \frac{P_{2k+1}(x)}{x^{2k+2}} dx \right|= O \left( \frac{1}{N^{2k+1}} \right) \quad \left(= O \left( \frac{1}{N^{k+1}} \right) \right), 
\end{align}
where $P_{2k+1}(x)$ is a periodic Bernoulli polynomial. 
Note that $[N/\Delta]= (N-a)/\Delta$. 
Recall that the $(2s-1)$-th Bernoulli numbers $B_{2s-1}$ ($s \geq 1$) are given by 
$$
B_{1}= -1/2 \qquad \text{and} \qquad B_{2s-1}=0 \quad (s \geq 2). 
$$

Then, it follows from the equality (7) in \cite{Knuth} and the above equality (\ref{Knuth9}) that 
\begin{align*}
\sum_{n=1}^{[N/\Delta]} \frac{1}{n}- \gamma & = \log \left[ \frac{N}{\Delta} \right]+ \frac{1}{2 [N/\Delta]}- \sum_{s=1}^{k} \frac{B_{2s}}{2s [N/\Delta]^{2s}}+ O \left( \frac{1}{N^{k+1}} \right) \\
& =  \log \left[ \frac{N}{\Delta} \right]- \sum_{s=1}^{2k} \frac{B_{s}}{s [N/\Delta]^{s}}+ O \left( \frac{1}{N^{k+1}} \right), 
\end{align*}
and therefore
\begin{align}\label{Knuth7}
\sum_{n=1}^{[N/\Delta]} \frac{1}{n}- \gamma & = \log \left[ \frac{N}{\Delta} \right]- \sum_{s=1}^{2k} \frac{B_{s}}{s [N/\Delta]^{s}}+ O \left( \frac{1}{N^{k+1}} \right) \nonumber \\
& = \log \left[ \frac{N}{\Delta} \right]- \sum_{s=1}^{k} \frac{B_{s}}{s [N/\Delta]^{s}}+ O \left( \frac{1}{N^{k+1}} \right) \nonumber \\
& = \log N+ \log \left( 1- \frac{a}{N} \right)- \log \Delta \nonumber \\
& \qquad \qquad - \sum_{s=1}^{k} \frac{B_{s} \Delta^{s}}{s N^{s}} \frac{1}{(1- a/N)^{s}}+ O \left( \frac{1}{N^{k+1}} \right) \nonumber \\
& = \log N- \log \Delta- \sum_{s=1}^{k} \frac{a^{s}}{s N^{s}} \nonumber \\
& \qquad \qquad - \sum_{s=1}^{k} \frac{B_{s} \Delta^{s}}{s N^{s}} \sum_{m \geq 0} \binom{s-1+m}{m} \left( \frac{a}{N} \right)^{m}+ O \left( \frac{1}{N^{k+1}} \right). 
\end{align}

On the other hand, since the inequality 
$$
\binom{s-1+m}{m}= \frac{s-1+m}{m} \cdot \frac{s-2+m}{m-1} \cdots \frac{s+1}{2} \cdot \frac{s}{1} \leq s^{m} 
$$
holds, we obtain the inequalities 
\begin{align*}
\sum_{s=1}^{k} \frac{B_{s} \Delta^{s}}{s N^{s}} \sum_{m>k-s} \binom{s-1+m}{m} \left( \frac{a}{N} \right)^{m} & \leq \sum_{s=1}^{k} \frac{B_{s} \Delta^{s}}{s N^{s}} \sum_{m> k-s} \left( \frac{sa}{N} \right)^{m} \\
&= \sum_{s=1}^{k} \frac{B_{s} \Delta^{s}}{s N^{s}} \left( \frac{sa}{N} \right)^{k-s+1} \frac{1}{1- sa/N} \\
& \leq \frac{1}{N^{k+1}} \cdot \frac{N}{N- ka} \sum_{s=1}^{k} \frac{B_{s} \Delta^{s}}{s} (sa)^{k-s+1}, 
\end{align*}
that is, 
\begin{align}\label{upperhalf}
\sum_{s=1}^{k} \frac{B_{s} \Delta^{s}}{s N^{s}} \sum_{m> k-s} \binom{s-1+m}{m} \left( \frac{a}{N} \right)^{m}= O \left( \frac{1}{N^{k+1}} \right). 
\end{align}

Hence, combining the equalities (\ref{Knuth7})(\ref{upperhalf}), we obtain 
\begin{align*}
\sum_{n=1}^{[N/\Delta]} \frac{1}{n} = \log N &- \log \Delta+ \gamma \\
& - \sum_{s=1}^{k} \left( a^{s}+ B_{s} \Delta^{s} \sum_{m=0}^{k-s} \binom{s+m-1}{m} \left( \frac{a}{N} \right)^{m} \right) \frac{1}{s N^{s}}+ O \left( \frac{1}{N^{k+1}} \right), 
\end{align*}
and the assertion of the item (d) follows after an elementary computation. 
\end{proof}

\bigskip

By using KeyLemma (c), we can prove the convergences of constants. 

\begin{lemma}\label{lem1}
Suppose that $X= (V, E)$ satisfies the same conditions as the main theorem. 

\medskip

(1) \ The series 
$$
K_{X}= \sum_{n \geq 1} \left( n \cdot \pi_{X}(n)- \sum_{\lambda \in \text{Spec}(W)} \lambda^{n} \right) R_{X}^{n}
$$
is convergent. 

\medskip

(2) \ The series 
$$
H_{X}= - \sum_{n \geq 1} \frac{1}{n} \left( n \cdot \pi_{X}(n)- \sum_{\lambda \in \text{Spec}(W)} \lambda^{n} \right) R_{X}^{n}
$$
is convergent. 
Moreover, 
$$
H_{X}= \sum_{[P]} \sum_{m \geq 2} \frac{1}{m} R_{X}^{m \ell(P)}
$$
holds. 
\end{lemma}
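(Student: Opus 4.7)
The core tool throughout is KeyLemma (c), which provides a geometric bound on the deviation $|n\pi_X(n) - \sum_{\lambda} \lambda^n|$ by $(1/R_X)^{(1-\alpha)n}$. Combined with the factor $R_X^n$, this bound makes both series absolutely convergent, and the identity in part (2) follows by comparing the two standard expansions of $\log Z_X(u)$ and then passing to the boundary of convergence.

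For part (1), fix any $\alpha \in (0, 1/2)$. By KeyLemma (c), there exists $N_0$ such that for $n \geq N_0$ the absolute value of the $n$-th summand of $K_X$ is at most $2\epsilon (1/R_X)^{(1-\alpha)n} \cdot R_X^n = 2\epsilon R_X^{\alpha n}$. Since $0 < R_X^{\alpha} < 1$, the tail is dominated by a convergent geometric series, so $K_X$ converges absolutely. The same estimate, with the additional $1/n$ factor, immediately yields absolute convergence of $H_X$, establishing the first assertion of part (2).

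For the identity in (2), I would compute $\log Z_X(u)$ in two ways for $|u| < R_X$. Expanding each Euler factor gives
\[
\log Z_X(u) = \sum_{[P]} \sum_{m \geq 1} \frac{u^{m \ell(P)}}{m} = \sum_{n \geq 1} \pi_X(n) u^n + \sum_{[P]} \sum_{m \geq 2} \frac{u^{m \ell(P)}}{m},
\]
while the two-term determinant formula (Fact (3)) and $-\log(1-\lambda u) = \sum_{n \geq 1}(\lambda u)^n/n$ gives
\[
\log Z_X(u) = \sum_{n \geq 1} \frac{u^n}{n} \sum_{\lambda \in \Spec(W)} \lambda^n.
\]
Subtracting and rearranging produces, for $u \in (0, R_X)$,
\[
\sum_{[P]} \sum_{m \geq 2} \frac{u^{m \ell(P)}}{m} = -\sum_{n \geq 1} \frac{1}{n}\left( n \pi_X(n) - \sum_{\lambda \in \Spec(W)} \lambda^n \right) u^n.
\]

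The main obstacle is justifying the substitution $u = R_X$, which lies on the boundary of the domain of convergence of $Z_X$. For the right-hand side, the bound $2\epsilon R_X^{\alpha n}/n$ derived above dominates each term uniformly in $u \in (0, R_X]$ and is summable, so dominated convergence yields the limit $H_X$ as $u \uparrow R_X$. For the left-hand side, the terms are nonnegative and monotone increasing in $u$, so monotone convergence yields the limit $\sum_{[P]} \sum_{m \geq 2} R_X^{m \ell(P)}/m$. Since the right-hand limit is finite, the left-hand series converges as well, and the identity at $u = R_X$ gives the claimed formula for $H_X$.
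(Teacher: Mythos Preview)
Your proof is correct and follows essentially the same route as the paper: KeyLemma~(c) gives absolute convergence of both series, then the two expansions of $\log Z_X(u)$ (via the Euler product and via Fact~(3)) yield the identity for $|u|<R_X$, and finally one passes to $u=R_X$. The only minor difference is in this last step: the paper bounds $\sum_{[P]}\sum_{m\ge 2} R_X^{m\ell(P)}/m$ directly by $P_X(R_X^2)/(1-R_X)$ (using that the radius of convergence of $P_X$ is $R_X$) and then invokes analytic continuation, whereas you obtain convergence of that double sum for free via monotone convergence from the already-established finiteness of the right-hand side---a slightly cleaner endgame.
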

\begin{proof}
Let $\{a_{n}\}$ be a sequence of real numbers with $0< a_{n} \leq 1$ for any $n$. 
Note that 
\begin{align*}
\left| \sum_{n \geq N_{0}} a_{n} \left( n \cdot \pi(n)- \sum_{\lambda \in \text{Spec}(W)} \lambda^{n} \right) R^{n} \right| & \leq \sum_{n \geq N_{0}} \left| n \cdot \pi(n)- \sum_{\lambda \in \text{Spec}(W)} \lambda^{n} \right| R^{n} \\
&< 2 \epsilon \sum_{n \geq N_{0}} \left( \frac{1}{R} \right)^{(1- \alpha)n} R^{n} \\
& \leq 2 \epsilon \sum_{n \geq 1} R^{\alpha n}= \frac{2 \epsilon R^{\alpha}}{1- R^{\alpha}} 
\end{align*}
by KeyLemma (c). 
Hence, the convergences of the items (1)(2) hold from this inequality. 

\medskip

Next, we show the equality of the item (2). 
Assume that $|u|< R$. 
It follows from Fact (3) and the definition of $Z_{X}(u)$ that 
\begin{align*}
\sum_{n \geq 1} \sum_{\lambda \in \text{Spec}(W)} \frac{\lambda^{n}}{n} u^{n} &= \sum_{\lambda \in \text{Spec}(W)} \log (1- \lambda u)^{-1} \\
&= \log Z_{X}(u) = \sum_{[P]} \log (1- u^{\ell(P)})^{-1} \\
&= \sum_{[P]} \sum_{m \geq 1} \frac{1}{m} u^{m \ell(P)} = \sum_{[P]} u^{\ell(P)}+ \sum_{[P]} \sum_{m \geq 2} \frac{1}{m} u^{m \ell(P)} \\
&= \sum_{n \geq 1} \pi(n) u^{n}+ \sum_{[P]} \sum_{m \geq 2} \frac{1}{m} u^{m \ell(P)}, 
\end{align*}
and therefore 
\begin{align}\label{lem1eq}
- \sum_{n \geq 1} \frac{1}{n} \left( n \cdot \pi_{X}(n)- \sum_{\lambda \in \text{Spec}(W)} \lambda^{n} \right) u^{n}= \sum_{[P]} \sum_{m \geq 2} \frac{1}{m} u^{m \ell(P)}. 
\end{align}

On the other hand, by the graph theory prime-number theorem (see Theorem 10.1 in \cite{Terras2011}), the radius of convergence of the function 
$$
P(u)= \sum_{[P]} u^{\ell(P)}= \sum_{n \geq 1} \pi(n) u^{n}
$$
is equal to $R$. 
Note that $R^{2}< R$ since $0<R<1$. 
Then, 
\begin{align*}
\sum_{[P]} \sum_{m \geq 2} \frac{1}{m} R^{m \ell(P)} &\leq \sum_{[P]} \sum_{m \geq 2} R^{m \ell(P)}= \sum_{[P]} \frac{R^{2 \ell(P)}}{1- R^{\ell(P)}} \\
& \leq \frac{1}{1- R} \sum_{[P]} R^{2 \ell(P)} \leq \frac{1}{1-R} P(R^{2})< +\infty. 
\end{align*}

Hence, since both sides of the equality (\ref{lem1eq}) are also convergent for $u=R$, the assertion follows by the uniqueness of analytic continuation (namely, the principle of uniqueness). 
\end{proof}

\section{Proof of the main theorem}\label{sect3}

In this section, we show the main theorem. 

\bigskip

\begin{proof} \ (The main theorem) \ 
(1) \ Assume that $N$ is sufficiently large. 
Then, it follows from KeyLemma (c) that we obtain 
\begin{align*}
\left| \sum_{n=1}^{N} n \cdot \pi(n) R^{n}- \sum_{n=1}^{N} \sum_{\lambda \in \text{Spec}(W)} (\lambda R)^{n}- K \right| &= \left| \sum_{n> N} \left( n \cdot \pi(n)- \sum_{\lambda \in \text{Spec}(W)}  \lambda^{n} \right) R^{n} \right| \\
& \leq \sum_{n> N} \left| n \cdot \pi(n)- \sum_{\lambda \in \text{Spec}(W)}  \lambda^{n} \right| R^{n} \\
&< 2 \epsilon \sum_{n> N} \left( \frac{1}{R} \right)^{(1-\alpha)n} R^{n} \\
&= 2 \epsilon \sum_{n> N} R^{\alpha n}= \frac{2 \epsilon R^{\alpha (N+1)}}{1- R^{\alpha}}, 
\end{align*}
and therefore by KeyLemma (a), we obtain 
\begin{align*}
\sum_{n=1}^{N} n \cdot \pi(n) R^{n} &= \sum_{n=1}^{N} \sum_{\lambda \in \text{Spec}(W)} (\lambda R)^{n}+ K+ O \left( (\rho R)^{N} \right) \\
&= \left[ \frac{N}{\Delta} \right] \Delta+ A+ K+ O \left( (\rho R)^{N} \right)
\end{align*}
as $N \to \infty$. 
Hence, the assertion of the item (1) follows. 

\medskip

(2) \ Suppose that $N$ is sufficiently large. 
Then, it follows from KeyLemma (c) that 
\begin{align*}
\left| \sum_{n=1}^{N} \pi(n) R^{n}- \sum_{n=1}^{N} \frac{1}{n} \sum_{\lambda \in \text{Spec}(W)} (\lambda R)^{n}+ H \right| &= \left| \sum_{n> N} \left( \pi(n)- \frac{1}{n} \sum_{\lambda \in \text{Spec}(W)}  \lambda^{n} \right) R^{n} \right| \\
&= \left| \sum_{n> N} \frac{1}{n} \left( n \cdot \pi(n)- \sum_{\lambda \in \text{Spec}(W)}  \lambda^{n} \right) R^{n} \right| \\
& \leq \sum_{n> N} \left| n \cdot \pi(n)- \sum_{\lambda \in \text{Spec}(W)} \lambda^{n} \right| R^{n} \\
& < 2 \epsilon \sum_{n> N} \left( \frac{1}{R} \right)^{(1-\alpha)n} R^{n} \\
&= 2 \epsilon \sum_{n> N} R^{\alpha n}= \frac{2 \epsilon R^{\alpha (N+1)}}{1- R^{\alpha}}, 
\end{align*}
and therefore we obtain 
\begin{align*}
\sum_{n=1}^{N} \pi(n) R^{n} = \sum_{n=1}^{N} \frac{1}{n} \sum_{\lambda \in \text{Spec}(W)} (\lambda R)^{n}- H+ O \left( R^{\alpha N} \right) 
\end{align*}
as $N \to \infty$. 
Hence, it follows from KeyLemmas (b)(d) that the assertion holds. 

\medskip

(3) \ Assume that $N$ is sufficiently large, and define the following functions: 
\begin{align*}
H^{\le N} = \sum_{n \le N} \pi(n) \sum_{m \geq 2} \dfrac{1}{m} R^{mn}, \qquad \text{and} \qquad H^{> N} = \sum_{n>N} \pi(n) \sum_{m \geq 2} \dfrac{1}{m} R^{mn}.
\end{align*}
Note that $H=H^{\le N}+H^{> N}$ by Lemma \ref{lem1} (2). 
It follows from the graph theory prime-number theorem (see Theorem 10.1 in \cite{Terras2011}) that there exists a constant $c_{1}>0$ such that for any $n>N$, 
$$
\pi(n)< \frac{c_{1}}{R^{n}}. 
$$
Recall that $0< R< 1$ since $X$ is a noncycle graph. 
Then, we obtain 
\begin{align*}
H^{>N} &= \sum_{n>N} \pi(n) \sum_{m \geq 2} \dfrac{1}{m} R_{X}^{mn}< c_{1} \sum_{n> N} \sum_{m \geq 2} R^{(m-1)n} \\
&= c_{1} \sum_{n> N} \frac{R^{n}}{1- R^{n}} < \frac{c_{1}}{1- R} \sum_{n> N} R^{n}= \frac{c_{1} R}{(1-R)^{2}} R^{N}, 
\end{align*}
and therefore $H^{>N}= O(R^{N})$. 

From the item (2) and the above result, we obtain 
\begin{align*}
\sum_{n \le N} \pi(n) R^{n} +H^{\le N} &= \log N+ \gamma+ \log C_{X}- H^{>N}+ O \left( \dfrac{1}{N} \right) \\
&= \log N+ \gamma+ \log C_{X}+ O \left( \dfrac{1}{N} \right). 
\end{align*}
Since the left-hand side of the above equality is equal to 
\begin{align*}
\sum_{n \le N} \pi(n) R^{n} +H^{\le N} &= \sum_{n \le N} \pi(n) \sum_{m=1}^{\infty} \dfrac{1}{m} R^{mn} \\
&= -\sum_{n \le N} \pi(n) \log \left( 1-R^{n} \right)= -\log \left( \prod_{n \le N} \left( 1-R^{n} \right)^{\pi(n)} \right), 
\end{align*}
we obtain 
\begin{align*}
\prod_{n \le N} \left( 1-R^{n} \right)^{\pi(n)} = \dfrac{e^{-\gamma}}{C_{X}} \cdot \dfrac{1}{N} \exp \left( O \left( \dfrac{1}{N} \right) \right)= \dfrac{e^{-\gamma}}{C_{X}} \cdot \dfrac{1}{N} \left( 1+ O \left( \dfrac{1}{N} \right) \right), 
\end{align*}
and the assertion of the item (3) follows. 
\end{proof}

\begin{remark*}
(1) \ When $k=0$, our second theorem just corresponds to a special case of the second theorem due to Pollicott (see Section \ref{sect1} in this paper). 
This is proved as follows: 

In our case, the topological entropy is equal to the constant $h= -\log R> 0$. 
We now define $u= R^{s}$, $N(P)= e^{h \ell(P)}= R^{-\ell(P)}$ and $x= e^{h N}$. 
Then, the left-hand side is equal to 
$$
\sum_{n \leq N} \pi(n) R^{n}= \sum_{\ell(P) \leq N} R^{\ell(P)}= \sum_{N(P) \leq x} \frac{1}{N(P)}. 
$$
On the other hand, the right-hand side can be transformed as follows. 
Note that 
\begin{align*}
C_{X} &= - \frac{1}{R} \cdot \lim_{u \uparrow R} (u-R) Z_{X}(u)\\
&= - \frac{1}{R} \cdot \lim_{s \downarrow 1} \frac{R^{s}- R}{s-1} \cdot \lim_{s \downarrow 1} (s-1) Z_{X}(R^{s})= h \cdot \Res{s}{1} Z_{X}(R^{s}). 
\end{align*}
It follows from Lemma \ref{lem1} (2) that 
$$
H_{X}= \sum_{[P]} \sum_{n \geq 2} \frac{1}{n} R^{n \ell(P)}= \sum_{[P]} \sum_{n \geq 2} \frac{1}{n} \cdot \frac{1}{N(P)^{n}}. 
$$
By combining the above results, we obtain 
\begin{align*}
\log N &+ \gamma+ \log C_{X} - H_{X} + O \left( \frac{1}{N} \right) \\
&= \log \left( \log x \right) + \gamma+ \log \left( \Res{s}{1} Z_{X}(R^{s}) \right) - \sum_{[P]} \sum_{n \geq 2} \frac{1}{n} \cdot \frac{1}{N(P)^{n}} + O \left( \frac{1}{\log x} \right). 
\end{align*}
Hence, we obtain 
\begin{align*}
\sum_{N(P) \leq x} \frac{1}{N(P)} = \log \left( \log x \right) &+ \gamma+ \log \left( \Res{s}{1} Z_{X}(R^{s}) \right) \\
&- \sum_{[P]} \sum_{n \geq 2} \frac{1}{n} \cdot \frac{1}{N(P)^{n}} + O \left( \frac{1}{\log x} \right). 
\end{align*}

\medskip

(2) \ The error term $O(1/N)$ in our second theorem can not be replaced by $o(1/N)$ since in general, the coefficient $\Delta/2- a(N)$ of $1/N$ is not equal to zero. 
\end{remark*}



\end{document}